\newtheorem{theorem}{Theorem}[section]
\newtheorem*{theorem*}{Theorem}
\newtheorem{lemma}[theorem]{Lemma}
\newtheorem{corollary}[theorem]{Corollary}
\newtheorem{conjecture}[theorem]{Conjecture}
\theoremstyle{definition}
\newtheorem{definition}[theorem]{Definition}
\theoremstyle{remark}
\newcommand{\F}{\mathcal{F}}
\newcommand{\conv}{\textrm{conv}}
\newcommand{\R}{\mathbb{R}}
\title{Piercing families of convex sets in the plane that avoid a certain subfamily with lines}
\author{Daniel McGinnis}
\thanks{The author graciously acknowledges support from the Iowa State University Department of Mathematics through the Lambert Graduate Assistantship}
\date{}
\begin{document}

\maketitle

\begin{abstract}
     We define a $C(k)$ to be a family of $k$ sets $F_1,\dots,F_k$ such that $\conv(F_i\cup F_{i+1})\cap \conv(F_j\cup F_{j+1})=\emptyset$ when $\{i,i+1\}\cap \{j,j+1\}=\emptyset$ (indices are taken modulo $k$). We show that if $\F$ is a family of compact, convex sets that does not contain a $C(k)$, then there are $k-2$ lines that pierce $\F$. Additionally, we give an example of a family of compact, convex sets that contains no $C(k)$ and cannot be pierced by $\left\lceil \frac{k}{2} \right\rceil -1$ lines.
\end{abstract}

\section{Introduction}
Let $\F$ be a family of sets in the plane; $\F$ is said to have a \textit{line transversal} if there is a line that intersects each set in $\F$. If every $r$ sets in $\F$ have a line transversal, then $\F$ is said to have the $T(r)$-property, and $\F$ is said to be $T^n$ if there are $n$ lines whose union intersects each set in $\F$. In this case we say $\F$ is \textit{pierced} by these lines. In 1969, Eckhoff showed that if $\F$ is a family of compact, convex sets that has the $T(r)$-property where $r\geq 4$, then $\F$ is $T^2$ \cite{eckhoff1969}. A result of Santalo shows that this result is best possible \cite{santalotheorem1940}, i.e. for all $r$, there exists a family of compact, convex sets with the $T(r)$ property that does not have a line transversal. Eckhoff also showed in 1973 that the $T(3)$-property does not imply $T^2$ \cite{eckhofftransversal1973}. In 1975, Kramer proved that the $T(3)$-property implies $T^5$ \cite{Kramer}. Eckhoff later showed in 1993 that the $T(3)$-property implies $T^4$, and conjectured that the $T(3)$-property in fact implies $T^3$ \cite{eckhoffgallai}. This conjecture has recently been verified by McGinnis and Zerbib \cite{mcginnis2021line}. In fact, they proved a stronger statement, which we now explain.

Three sets $F_1,F_2,F_3$ in the plane are said to be a \textit{tight triple} if $\conv(F_1\cup F_2)\cap \conv(F_2\cup F_3)\cap \conv(F_3\cup F_1)\neq \emptyset$. This was first defined by Holmsen \cite{holmsenGeometric2013}. A family of planar sets will be called \textit{a family of tight triples} if every three sets in the family are a tight triple. If three sets have a line transversal, then they are a tight triple as the convex hull of two of the three sets intersects the third. McGinnis and Zerbib showed that a family of tight triples consisting of compact, convex sets is $T^3$, which implies Eckhoff's conjecture. 

%Corresponding fractional problems have also been considered. For $0<p\leq 1$, we say that family $\F$ has a $p$-transversal if there is a line that pierces $p|\F|$ members of $\F$. Let $0< \alpha(n,r)\leq 1$ be the largest fraction such that any family of compact, convex sets $\F$ in the plane with the $T(r)$-property has an $\alpha(n,r)$-transversal. Then, we define $\alpha(r)=\lim_{n\rightarrow \infty}\alpha(n,r)$. The first bounds on $\alpha(r)$ were obtained by Katchalski and Liu in \cite{katchalskiSymmetric1980}; in particular, they showed that $\lim_{r \rightarrow \infty}\alpha(r) \rightarrow 1$. Eckhoff later made further improvements on the bounds for $\alpha(r)$ \cite{eckhoffCommon2008}. The current best bounds are due to Holmsen who showed that $(2/r(r-1))^{1/(r-2)}\leq \alpha(r)\leq (k-2)/(k-1)$ \cite{holmsenNew2010}.

%On a similar note, Holmsen showed that if $\F$ is a family of tight triples, then $\F$ has a $\frac{1}{8}$-transversal \cite{holmsenGeometric2013}. However, since it is now known that families of tight triples are $T^3$, it follows that these families have a $\frac{1}{3}$-transversal. We note that Holmsen suggested that families of tight triples may have a $\frac{2}{5}$-transversal and demonstrated an example showing that this would be best possible \cite{holmsenGeometric2013}.

The main purpose of this paper is to prove an extension of the result that families of tight triples are $T^3$. We define a certain type of family of sets, which we call a $C(k)$.

\begin{definition}
For $k\geq 4$, we define a $C(k)$ to be a family of $k$ distinct sets in the plane together with a linear ordering, say $F_1,\dots,F_k$ where the sets are ordered by their indices, such that $\conv(F_i\cup F_{i+1})\cap \conv(F_j\cup F_{j+1})=\emptyset$ when $\{i,i+1\}\cap \{j,j+1\}=\emptyset$ (indices are taken modulo $k$). Additionally, we define a $C(3)$ to be a family of three disjoint sets in the plane that is not a tight triple.
\end{definition}

Roughly speaking, if $F_1,\dots,F_k$ is a $C(k)$, then the union $\cup_{i=1}^{k}\conv(F_i\cup F_{i+1})$ resembles a closed loop that does not cross itself (see Figure \ref{fig:C(5)}). Notice that the sets in a $C(k)$ are pairwise disjoint.

\begin{figure}
    \centering
    \includegraphics[scale=.4]{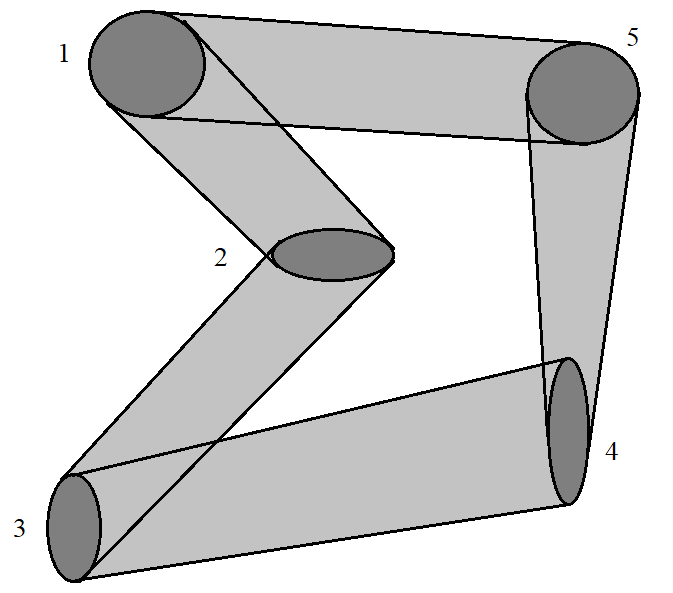}
    \caption{An example of a $C(5)$. The sets of the $C(k)$ are in dark gray, and the ordering on these sets is indicated by the numbers $1,\dots,5$. The dark gray and light gray together depict the union $\bigcup_{i=1}^5\conv(F_i\cup F_{i+1})$.}
    \label{fig:C(5)}
\end{figure}

A family $\F$ is said to be \textit{$C(k)$-free} if $\F$ does not contain a $C(k)$ as a subfamily. We note that a $C(k)$-free family may not be $C(k-1)$-free, and similarly, a $C(k-1)$-free family need not be $C(k)$-free. (see Figure \ref{fig:C(5)-free}).

\begin{figure}
    \centering
    \includegraphics[scale=.6]{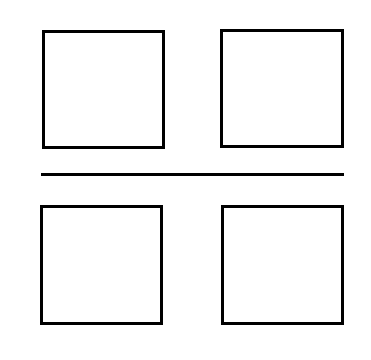}
    \caption{A $C(5)$-free and $C(3)$-free family that is not $C(4)$-free.}
    \label{fig:C(5)-free}
\end{figure}

Let $L(k)$ be the smallest integer such that any $C(k)$-free family of compact, convex sets can be pierced by $L(k)$ lines. The following is the main result of this paper.

\begin{theorem}\label{main}
Let $k\geq 4$. We have the following:
\[
\left\lceil \frac{k}{2} \right\rceil\leq L(k)\leq k-2.
\]
\end{theorem}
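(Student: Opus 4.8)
The plan is to treat the two inequalities separately: the lower bound $L(k)\ge \lceil k/2\rceil$ by an explicit construction, and the upper bound $L(k)\le k-2$ by induction on $k$.

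For the lower bound I would look for a family in which every line meets at most two members, since such a family of $N$ sets requires at least $\lceil N/2\rceil$ lines, and I would realize the ``at most two per line'' property by putting the sets in convex position. The tension is that $N$ sets in convex position already contain a $C(N)$ (their convex-position cyclic order makes the consecutive hulls pairwise non-adjacent-disjoint), so to stay $C(k)$-free one is limited to $N\le k-1$ sets, giving only $\lceil (k-1)/2\rceil=\lfloor k/2\rfloor$ lines. For even $k$ this already equals $\lceil k/2\rceil$. For odd $k$ one must gain a single extra line, which I would attempt by augmenting the $k-1$ convex-position sets with one additional set placed off every line through a pair of the others (forcing a $(\lceil k/2\rceil)$-th line) while choosing its shape---say a sufficiently long segment---so that the strict hull-disjointness demanded by a $C(k)$ fails in every cyclic order. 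Verifying that this gadget simultaneously raises the piercing number and destroys all $C(k)$'s is the delicate point of the construction.

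For the upper bound I would induct on $k$, aiming for $L(k)\le 1+L(k-1)$ with base case $L(4)\le 2$. The base case asserts that a $C(4)$-free family is $T^2$; since a family with the $T(4)$-property is $C(4)$-free (four sets sharing a transversal cannot be cyclically separated into two non-adjacent disjoint hulls), this is a strengthening of Eckhoff's theorem, and I would prove it by extending his argument, or alternatively by the topological method used for tight triples. For the inductive step, given a $C(k)$-free family $\F$ with $k\ge 5$: if $\F$ contains no $C(k-1)$ at all then it is already $C(k-1)$-free and the inductive hypothesis yields $k-3\le k-2$ lines. Otherwise the goal is to produce a single line $\ell$ meeting at least one member of every $C(k-1)$ in $\F$; then $\F':=\{F\in\F:F\cap\ell=\emptyset\}$ is $C(k-1)$-free, and $\ell$ together with the $k-3$ lines furnished by the inductive hypothesis for $\F'$ pierce all of $\F$.

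The main obstacle is exactly the existence of this line $\ell$. The heuristic I would pursue is extremal: choose $\ell$ from a generic supporting direction and argue by contradiction that if some $C(k-1)$ loop $G_1,\dots,G_{k-1}$ survived in $\F'$, then an extremal set met by $\ell$ could be spliced into the edge of the loop facing that direction---inserted between two consecutive $G_i$---to yield a $C(k)$ in $\F$, contradicting $C(k)$-freeness. Turning this picture into a proof requires (i) arranging that $\ell$ pierces enough sets to meet every $C(k-1)$, not merely the single most extreme set, and (ii) checking that the two new hulls created by the splice are disjoint from all non-adjacent hulls of the loop, which is where extremality must be exploited carefully and where a highly non-convex loop could cause trouble. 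If this combinatorial splicing resists, the fallback is a global topological argument generalizing the tight-triples proof: set up a KKM or Borsuk--Ulam cover on a configuration space of $(k-2)$-tuples of directions and use $C(k)$-freeness to verify the covering condition.
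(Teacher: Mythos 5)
Your proposal is a strategy with two genuine gaps, and the first one is fatal as stated. For the odd-$k$ lower bound, the gadget of $k-1$ convex-position points plus one long segment $S$ avoiding all connecting lines does \emph{not} destroy all $C(k)$'s. The constraint that $S$ miss every line through a pair of points confines $S$ to a single cell of that line arrangement, and then $S$ behaves like one extra vertex: inserting it into the cyclic order between the two points whose hull edge faces $S$ yields a simple closed loop whose non-adjacent hulls are disjoint, i.e.\ a $C(k)$. Concretely, take the $k-1$ points $q_1,\dots,q_{k-1}$ left to right on a shallow convex arc (all connecting lines nearly horizontal) and $S$ a long vertical segment far above the $y$-axis; $S$ misses every connecting line, yet the cycle $S,q_1,q_2,\dots,q_{k-1}$ is a $C(k)$: the two triangles $\conv(S\cup q_1)$ and $\conv(S\cup q_{k-1})$ are adjacent (they share $S$), and each stays above all non-adjacent arc edges. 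More generally, any $k$ points in general position admit a simple polygonization and hence contain a $C(k)$, which is exactly why the paper's even-$k$ remark stops at $k-1$ points; a point-like gadget cannot get you the extra line. The paper's actual lower-bound construction is quite different: an Eckhoff-style family of $3(k-1)$ chords of a circle in three rotational classes, in which many pairs of sets \emph{intersect} (sets of a $C(k)$ must be pairwise disjoint), with $C(k)$- and $C(k+1)$-freeness proved by counting marked points on arcs, and non-pierceability from the fact that any line meets at most $6$ members (and at most $5$ if it meets one distinguished member), giving $6(a-1)+5\geq 3(k-1)$.

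For the upper bound, your induction hinges on an unproven lemma---one line meeting a member of \emph{every} $C(k-1)$ in $\F$---and your splicing heuristic has a quantifier problem on top of the non-convex-loop issue you flag: different surviving $C(k-1)$ loops may each be spliceable only using different directions/lines, so even if each loop individually yields a $C(k)$ with some pierced extremal set, no single $\ell$ need work for all of them. The base case $L(4)\leq 2$ is also not free: it strengthens Eckhoff's theorem and is only established in the concluding remarks of the McGinnis--Zerbib paper. The paper bypasses induction entirely; your ``fallback'' is in fact its method. A point $x\in\Delta^{2(k-2)-1}$ determines $2(k-2)$ points on the unit circle and $k-2$ chords $\ell_i(x)$; one defines regions $R^i_x$ between consecutive chords (with a careful asymmetric definition for $k-2\leq i\leq 2(k-2)$) and open sets $A_i=\{x: R^i_x \textrm{ contains a member of } \F\}$. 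If no $x$ gives $k-2$ piercing chords, a covering lemma shows every $x$ lies in some $A_i$, the face condition holds since $x_i=0$ forces $R^i_x=\emptyset$, and the KKM theorem yields $x\in\bigcap_{i=1}^{2(k-2)}A_i$. A quadrant lemma then selects one region among $R^1_x,\dots,R^{k-3}_x$ inside the quadrant $Q_1$ of $\ell_{k-3}(x),\ell_{k-2}(x)$, and the $k$ sets sitting in the chosen regions are separated by the chords into exactly the cyclic disjointness pattern of a $C(k)$---contradiction. If you pursue your fallback seriously, the real work is precisely this design of the regions and the extraction of the $C(k)$, neither of which your sketch supplies.
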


For $k=4$, the lower bound of Theorem \ref{main} follows from the result of Santalo \cite{santalotheorem1940} that there are families with the $T(4)$-property that do not have a line transversal. This is due to the fact that a $C(k)$ cannot have a line transversal, so a family with the $T(4)$-property is in particular $C(4)$-free. We also note that the upper bound for $k=4$ was essentially proved in the concluding remarks of \cite{mcginnis2021line}. Indeed, the proof outlined in \cite{mcginnis2021line} shows that if $\F$ is a family of compact, convex sets in the plane that is not $T^2$, then there are non-parallel lines such that each quadrant defined by these two lines contains a set from $\F$. These 4 sets then make up a $C(4)$.

For $k=5$, we get a tight result.
\begin{corollary}
The following equality holds: $$L(5)=3.$$
\end{corollary}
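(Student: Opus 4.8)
The plan is to obtain this equality as an immediate consequence of Theorem \ref{main} by specializing to $k=5$; no new geometric argument is needed beyond the two bounds already established there. Substituting $k=5$ into the inequality $\left\lceil \frac{k}{2}\right\rceil \leq L(k) \leq k-2$, the upper bound reads $L(5) \leq 5-2 = 3$, and the lower bound reads $L(5) \geq \left\lceil \frac{5}{2}\right\rceil = \lceil 2.5\rceil = 3$. Since the two bounds coincide at $3$, we get $3 \leq L(5) \leq 3$, forcing $L(5)=3$.

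The content of each bound traces back to a distinct part of the main theorem, and it is worth recording which is which. The upper bound $L(5)\leq 3$ is the piercing statement: every $C(5)$-free family of compact, convex sets can be pierced by $k-2 = 3$ lines. The lower bound $L(5)\geq 3$ comes from the extremal construction announced in the abstract, namely a $C(k)$-free family that cannot be pierced by $\left\lceil \frac{k}{2}\right\rceil - 1$ lines; for $k=5$ this produces a $C(5)$-free family that cannot be pierced by $\left\lceil \frac{5}{2}\right\rceil - 1 = 2$ lines, which rules out $L(5)\leq 2$ and hence gives $L(5)\geq 3$.

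There is essentially no obstacle to this corollary itself, since it is a direct arithmetic specialization; the genuine difficulty lives entirely inside Theorem \ref{main}. The only point I would be careful to flag is that $k=5$ is precisely the value where the general upper and lower bounds of Theorem \ref{main} meet, which is why the estimate becomes tight here (in contrast to general $k$, where a gap between $\left\lceil \frac{k}{2}\right\rceil$ and $k-2$ remains). Thus the statement requires nothing more than invoking both halves of Theorem \ref{main} and noting that they agree at $k=5$.
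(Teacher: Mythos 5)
Your proof is correct and matches the paper's (implicit) argument exactly: the corollary follows by substituting $k=5$ into Theorem \ref{main}, where the lower bound $\left\lceil \frac{5}{2}\right\rceil = 3$ and the upper bound $5-2=3$ coincide.
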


The proof of Theorem \ref{main} is split into two sections. Section \ref{lower} is dedicated to proving the lower bound of Theorem \ref{main}, and Section \ref{upper} is dedicated to the upper bound. For 2 points $p$ and $q$, we denote by $[p,q]$ to be the line segment connecting $p$ and $q$. If $p=q$, then $[p,q]$ consists of a single point.

\section{The lower bound}\label{lower}

In this section we exhibit a $C(k)$-free family $\F$ that is not $T^{\lceil \frac{k}{2} \rceil-1}$. The inspiration for the construction of such a family comes from \cite{eckhofftransversal1973} where an example of a family of compact, convex sets with the $T(3)$-property that is not $T^2$ is exhibited by Eckhoff. As mentioned earlier, the result is already established for $k=4$ so we may assume that $k\geq 5$. 

For $k$ odd, we will present a family $\F$ that is both $C(k)$-free and $C(k+1)$-free and is not $T^{\lceil \frac{k}{2} \rceil-1}$. This will establish the lower bound of Theorem \ref{main}. We note that for $k$ even, an example of a $C(k)$-free family that is not $T^{\lceil \frac{k}{2} \rceil-1}$ is given simply by $k-1$ points in general position. However, in this example, the family is $C(k)$-free for the seemingly trivial reason that are only $k-1$ sets in the family. For each $k\geq 5$, we present a family demonstrating the lower bound of Theorem \ref{main} that contains more than $k$ sets.

Let $p_1,\dots,p_{3(k-1)}$ be equidistant points on the unit circle, arranged clockwise. For $1\leq i\leq 3$, let $p^\ell_i$ be a point lying slightly counterclockwise to $p_i$ and $p^r_i$ to be a point lying slightly clockwise to $p_i$ in such a way that the points $p^\ell_1,\,p^r_1,\,p^\ell_2,\,p^r_2,\,p^\ell_3,\,p^r_3,\,p_4,\,\dots\,,p_{3(k-1)}$ are arranged in clockwise order.

We first define three families of sets, which consists only of line segments (see Figure \ref{fig:example}).
\[
\F_1=\{[p_1^r,p_3^\ell],[p_4,p_6],[p_7,p_9],\dots,[p_{3(k-1)-2},p_{3(k-1)}]\}
\]
\[
\F_2=\{[p_2^r,p_4],[p_5,p_7],[p_8,p_{10}],\dots,[p_{3(k-1)-4},p_{3(k-1)-2}],[p_{3(k-1)-1},p_1^\ell]\}
\]
\[
\F_3=\{[p_3^r,p_5],[p_6,p_8],[p_{10},p_{12}],\dots,[p_{3(k-1)-3},p_{3(k-1)-1}],[p_{3(k-1)},p_2^\ell]\}.
\]
Finally, we take $\F=\F_1\cup \F_2\cup \F_3$ (see Figure \ref{fig:example}). We now show that $\F$ is $C(k)$-free and $C(k+1)$-free, and we show it is not $T^{\lceil \frac{k}{2} \rceil-1}$. For a set $[p,q]\in \F$ we say that $p$ comes clockwise before $q$ if the clockwise distance along the unit circle from $p$ to $q$ is less than the clockwise distance from $q$ to $p$. When $[p,q]\in \F$ and $p$ comes clockwise before $q$, we denote arc$[p,q]$ to be the arc along the unit circle that goes clockwise from $p$ to $q$. Also, we use $I([p,q])$ to denote the set of indices $i$ such that $p_i\in$ arc$[p,q]$, $p_i^\ell\in$ arc$[p,q]$, or $p_i^r\in$ arc$[p,q]$. For example, $I(p_2^r,p_4)=\{2,3,4\}$. Note that if $F_1,F_2\in \F$ intersect, then $I(F_1)\cap I(F_2)\neq \emptyset$.

\begin{figure}
    \centering
    \begin{tikzpicture}
    
    \draw[red] (4*360/12-5:3cm)--(2*360/12+5:3cm);
    \draw[red] (360/12:3cm)--(11*360/12:3cm);
    \draw[red] (10*360/12:3cm)--(8*360/12:3cm);
    \draw[red] (7*360/12:3cm)--(5*360/12:3cm);
    
    \draw[blue] (3*360/12-5:3cm)--(360/12:3cm);
    \draw[blue] (3,0)--(10*360/12:3cm);
    \draw[blue] (9*360/12:3cm)--(7*360/12:3cm);
    \draw[blue] (6*360/12:3cm)--(4*360/12+5:3cm);
    
    \draw[green] (2*360/12-5:3cm)--(3,0);
    \draw[green] (11*360/12:3cm)--(9*360/12:3cm);
    \draw[green] (8*360/12:3cm)--(6*360/12:3cm);
    \draw[green] (5*360/12:3cm)--(3*360/12+5:3cm);
    
    \draw (0,0) circle (3cm);
    \filldraw [black] (3,0) circle (2pt) node[right] {$p_5$};
    \filldraw [black] (360/12:3cm) circle (2pt) node[right] {$p_4$};
    \filldraw [black] (2*360/12+5:3cm) circle (2pt) node[above] {$p_3^\ell$};
    \filldraw [black] (2*360/12-5:3cm) circle (2pt) node[above] {$p_3^r$};
    \filldraw [black] (3*360/12-5:3cm) circle (2pt) node[above] {$p_2^r$};
    \filldraw [black] (3*360/12+5:3cm) circle (2pt) node[above] {$p_2^\ell$};
    \filldraw [black] (4*360/12-5:3cm) circle (2pt) node[above] {$p_1^r$};
    \filldraw [black] (4*360/12+5:3cm) circle (2pt) node[above] {$p_1^\ell$};
    \filldraw [black] (5*360/12:3cm) circle (2pt) node[left] {$p_{12}$};
    \filldraw [black] (6*360/12:3cm) circle (2pt) node[left] {$p_{11}$};
    \filldraw [black] (7*360/12:3cm) circle (2pt) node[left] {$p_{10}$};
    \filldraw [black] (8*360/12:3cm) circle (2pt) node[below] {$p_9$};
    \filldraw [black] (9*360/12:3cm) circle (2pt) node[below] {$p_8$};
    \filldraw [black] (10*360/12:3cm) circle (2pt) node[below] {$p_7$};
    \filldraw [black] (11*360/12:3cm) circle (2pt) node[right] {$p_6$};
    
    \end{tikzpicture}
    \caption{An example demonstrating the lower bound of Theorem \ref{main} for $k=5$. The sets in $\F_1$ are red, in $\F_2$ are blue, and in $\F_3$ are green.}
    \label{fig:example}
\end{figure}

\begin{lemma}\label{freeness}
The family $\F$ is $C(k)$-free and $C(k+1)$-free.
\end{lemma}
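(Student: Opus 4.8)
The plan is to translate the statement into a combinatorial counting problem about the arcs and then rule out $C(k)$ and $C(k+1)$ by a single inequality. First I would record a dictionary between the segments of $\F$ and the $3(k-1)$ windows of three consecutive indices modulo $3(k-1)$: every segment has index set $I(F)=\{c,c+1,c+2\}$, each such window occurs exactly once, and the families $\F_1,\F_2,\F_3$ correspond to $c\equiv 1,2,0 \pmod 3$. Since the segments are chords of the unit circle whose arcs all have the same short length, two of them are disjoint exactly when their arcs are disjoint. For segments whose windows start at $c$ and $c'$, this happens generically only when the index sets are separated by a genuine gap; the one family of exceptions is when the two windows share a single index $s$ and $s\in\{1,2,3\}$, because then the splitting of $p_1,p_2,p_3$ into $p_i^\ell,p_i^r$ pulls the common endpoint apart. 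I would call these three coincidences the \emph{bonus pairs}: precisely $(\{3k-4,3k-3,1\},\{1,2,3\})$, $(\{3k-3,1,2\},\{2,3,4\})$, and $(\{1,2,3\},\{3,4,5\})$, at indices $1,2,3$ respectively.

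Now suppose toward a contradiction that $G_1,\dots,G_m$ is a $C(m)$ contained in $\F$ with $m\in\{k,k+1\}$, and write $H_i=\conv(G_i\cup G_{i+1})$. The $G_i$ are pairwise disjoint, so the chords are pairwise non-crossing and have a well-defined cyclic order around the circle. The key preliminary step is to show this geometric order agrees with the cyclic order of the $C(m)$. For this I would first observe that $G_j\cap H_i=\emptyset$ whenever $j\notin\{i,i+1\}$: indeed $G_j$ is contained in one of $H_j,H_{j-1}$, each of which is non-adjacent to $H_i$ (using $m\geq 5$), hence disjoint from $H_i$ by the defining property of a $C(m)$. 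Consequently the open circular arc lying between $G_i$ and $G_{i+1}$ and facing the interior of $H_i$ can contain no endpoint of any other chord: such an endpoint would force that chord either into the convex set $H_i$ or across $G_i$ or $G_{i+1}$. Thus $G_{i+1}$ is the very next chord after $G_i$ in the circular order, the two orders coincide, and the chords wind around the circle exactly once.

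With the orders identified I would set up a telescoping count. If $G_i$ has window starting at $c_i$, each arc spans two index-steps and the gap following $G_i$ has length $g_i=c_{i+1}-c_i-2\geq 0$ (modulo $3(k-1)$), so winding once gives $\sum_{i=1}^m(2+g_i)=\sum_{i=1}^m(c_{i+1}-c_i)=3(k-1)$, i.e.\ $\sum_i g_i=3(k-1)-2m$. By the dictionary, $g_i=0$ forces $(G_i,G_{i+1})$ to be a bonus pair. The final input is that at most two bonus pairs can occur in one $C(m)$: the pairs at indices $1$ and $3$ both involve the segment $\{1,2,3\}$ — as its predecessor and successor — and are mutually consistent, but the pair at index $2$ involves $\{3k-3,1,2\}$ and $\{2,3,4\}$, each of which shares two indices with $\{1,2,3\}$ and therefore crosses it, so it is incompatible with either of the other two. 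Hence at most two of the $g_i$ vanish and the rest are at least $1$, giving $\sum_i g_i\geq m-2$. Combining,
\[
3(k-1)-2m \;\geq\; m-2,\qquad\text{so}\qquad 3m\leq 3k-1,\qquad\text{whence}\qquad m\leq k-1,
\]
contradicting $m\in\{k,k+1\}$. Therefore $\F$ is $C(k)$-free and $C(k+1)$-free.

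I expect the main obstacle to be the cyclic-order step: turning the intuitively obvious claim ``consecutive sets of the $C(m)$ are geometrically consecutive on the circle'' into a rigorous statement requires both the avoidance property $G_j\cap H_i=\emptyset$ and a careful use of convexity to exclude a stray endpoint in the bridging arc. Once that is secured, the counting is routine, and the bound of at most two bonus pairs — which is exactly where the splitting of $p_1,p_2,p_3$ and the specific wrap-around of the three families are used — is what upgrades the easy estimate $m\leq k$ to the needed $m\leq k-1$.
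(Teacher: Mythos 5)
Your global plan (windows, gap counting, bonus pairs, $3(k-1)-2m\geq m-2$ hence $m\leq k-1$) is arithmetically sound, and your inventory of the three bonus pairs and the proof that at most two of them can coexist in a pairwise disjoint subfamily are correct. But the step you yourself flag as the main obstacle contains a genuine error as written. Write $G_i=[a,b]$, $G_{i+1}=[c,d]$, with gap arcs $(b,c)$ and $(d,a)$, and $H_i=\conv(G_i\cup G_{i+1})$. A chord with exactly one endpoint in the open arc $(b,c)$ does cross the segment $[b,c]\subset H_i$ and is excluded by your avoidance property $G_j\cap H_i=\emptyset$. However, a chord with \emph{both} endpoints in $(b,c)$ lies in the circular cap cut off by the chord $[b,c]$, which is on the opposite side of the line through $b$ and $c$ from $H_i$: such a chord is disjoint from $H_i$ and crosses neither $G_i$ nor $G_{i+1}$, so your dichotomy (``into $H_i$ or across $G_i$ or $G_{i+1}$'') fails in exactly the case that threatens the winding-number-one conclusion, and without that conclusion the identity $\sum_i(2+g_i)=3(k-1)$ can fail (a larger winding multiple destroys the contradiction). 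The step is repairable: if $G_u$ lies in the cap over $(b,c)$ and $\{u,u+1\}\cap\{i,i+1\}=\emptyset$, then $\conv(G_u\cup G_{u+1})$ would cross $[b,c]\subset H_i$ unless $G_{u+1}$ lies in the same cap; propagating in both directions around the cycle forces all of $G_{i+2},\dots,G_{i-1}$ into that one cap, so the \emph{other} gap arc $(d,a)$ is endpoint-free. Since for every $i$ at least one of the two gap arcs is thus free, $G_i$ and $G_{i+1}$ are adjacent in the circular order for every $i$, which pins the circular order to the cycle order up to reflection. So your proof can be completed, but not by the argument you gave.

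You should also know that the paper's own proof sidesteps this entire difficulty with a short pigeonhole argument. In $\F$, a set disjoint from $[p,q]$ has no endpoint on arc$[p,q]$, so pairwise disjoint members of $\F$ have pairwise disjoint arcs; each arc contains at least $3$ of the $3k$ points of $P$, so $k$ pairwise disjoint sets force exactly $3$ points per arc with all of $P$ covered; but every set whose arc contains $p_2^r$ has at least $4$ points of $P$ in its arc, a contradiction, and $k+1$ pairwise disjoint sets are impossible outright since $3(k+1)>3k$. Note that this uses only the pairwise disjointness of the members of a $C(m)$ — never the convex-hull conditions, the cyclic order, or any winding argument — and therefore proves the stronger statement that $\F$ contains no $k$ pairwise disjoint sets at all. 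Your bonus-pair bookkeeping is, in effect, a finer-grained version of the paper's one-line observation that any arc through a split point $p_2^r$ is ``too long''; the extra structure you invoke (the cyclic-order lemma) is precisely the part that is both delicate and, for this lemma, unnecessary.
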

\begin{proof}
Let $[p,q]\in \F$ where $p$ comes clockwise before $q$. There is no set of $\F$ that is disjoint from $[p,q]$ and contains a point on arc$[p,q]$. Since for each such set $[p,q]$, arc$[p,q]$ contains at least 3 of the $3k$ points in
\[
P=\{p^\ell_1,\,p^r_1,\,p^\ell_2,\,p^r_2,\,p^\ell_3,\,p^r_3,\,p_4,\,\dots\,,p_{3(k-1)}\},
\]
a $C(k)$ of $\F$ has the property that each set $[p,q]$ in the $C(k)$ contains exactly 3 points in arc$[p,q]$, and every point of $P$ is in arc$[p,q]$ for some $[p,q]$ of the $C(k)$. However, each set $[p',q']$ of $\F$ that contains $p_2^r$ in arc$[p',q']$ has the property that at least 4 points of $P$ are contained in arc$[p',q']$, a contradiction. Also, $\F$ is $C(k+1)$-free by the same reasoning.
\end{proof}

\begin{lemma}
The family $\F$ is not $T^{\lceil \frac{k}{2} \rceil-1}$.
\end{lemma}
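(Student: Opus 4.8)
The plan is to reduce the piercing problem to a purely combinatorial statement about the $3k$ points of $P$ on the circle, and then to derive a contradiction from a short parity computation that crucially uses the three ``doubled'' clusters at $p_1,p_2,p_3$. First I would record the dictionary between lines and $P$. A line meets a member of $\F$ only if it crosses the open disk, so I may assume each piercing line is a secant, meeting the circle in two points $u,v$ (tangent and exterior lines pierce nothing, or only segments through the tangency point, which is even more restrictive). Since $P$ lies on the circle, any line contains at most two points of $P$. For a segment $[p,q]\in\F$ (with $p$ clockwise before $q$), the secant meets the chord $[p,q]$ iff the line through $p,q$ separates $u$ from $v$; equivalently, exactly one of $u,v$ lies on the open arc $\mathrm{arc}[p,q]$, or one of $u,v$ equals $p$ or $q$. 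Hence: \emph{if a secant pierces $[p,q]$, then one of its two crossing points lies on the closed arc $\overline{\mathrm{arc}[p,q]}$.}

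The key local estimate I would then prove is that every point of the circle lies on $\overline{\mathrm{arc}[p,q]}$ for at most three segments $[p,q]\in\F$, and every point lying in an open gap between consecutive points of $P$ for at most two of them. For a point of $P$ this is checked by listing the (at most one) segment containing it in the interior of its arc and the (at most two) segments having it as an endpoint; for gap points the bound follows from the mod-$3$ pattern of the three families, and the cluster points in fact have even smaller incidence. Combined with the dictionary, a secant through $t$ points of $P$ pierces at most $4+t$ members of $\F$ (at most $2$ per crossing point in a gap, at most $3$ per crossing point at a point of $P$), so every line pierces at most $6$ members.

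Now $|\F|=3(k-1)$ and $m:=\lceil k/2\rceil-1=(k-1)/2$, so $6m=|\F|$. If $m$ lines pierced $\F$, then $|\F|\le\sum_{\text{lines}}(\#\text{pierced})\le 6m=|\F|$, forcing equality everywhere: each line is a secant through exactly two points of $P$ piercing exactly $6$ members, and these pierced sets are pairwise disjoint and cover $\F$ exactly once. The $2m=k-1$ crossing points are then distinct; writing $S\subseteq P$ for this set of $k-1$ points, \emph{every} segment $[p,q]\in\F$ has exactly one point of $S$ on $\overline{\mathrm{arc}[p,q]}$. It therefore suffices to show no such $S$ exists. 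Labelling the points of $P$ by their clockwise positions $1,\dots,3k$ and encoding $S$ as a $0/1$ vector $x$, all but three segments have $\overline{\mathrm{arc}[p,q]}$ equal to three consecutive positions; these ``window'' constraints $x_i+x_{i+1}+x_{i+2}=1$ occur for every start $i\in\{6,7,\dots,3k-1\}$ (cyclically), which forces $x_i=x_{i+3}$ throughout positions $6,7,\dots,3k,1$, so in particular $x_{3k}=x_6$ and $x_1=x_7$. The three exceptional segments are exactly those adjacent to the clusters, whose arcs carry four consecutive positions: $D=\{3k,1,2,3\}$, $A=\{2,3,4,5\}$, $B=\{4,5,6,7\}$, each giving a constraint that its four entries sum to $1$. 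Adding the $D$ and $B$ equations and subtracting the $A$ equation cancels $x_2,x_3,x_4,x_5$ and gives $x_{3k}+x_1+x_6+x_7=1$; but $x_{3k}=x_6$ and $x_1=x_7$ make the left side $2(x_6+x_7)$, which is even. This contradiction shows $S$ cannot exist, so $\F$ is not $T^{\lceil k/2\rceil-1}$.

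I expect the main obstacle to be the rigidity step: proving that a piercing by $m$ lines must degenerate into the exact-hitting configuration. The subtlety is that a crossing point sitting at a point of $P$ can pierce three segments at once (one by interior crossing, two by a shared endpoint), so the per-line bound is genuinely $6$ and the global count leaves no slack — every line must be a secant through two points of $P$ realizing the extremal value. It is worth emphasizing that the clusters are essential: for $3(k-1)$ equally spaced points carrying the three families of ``skip-one'' chords, taking $S$ to be every third point produces precisely the forbidden configuration, so any correct argument must break exactly this period-three structure, which is what the quadruple arcs $A,B,D$ accomplish.
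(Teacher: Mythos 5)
Your proof is correct, but its endgame takes a genuinely different (and heavier) route than the paper's. You share the first half exactly: a piercing line meets the circle in at most two points, each pierced segment $[p,q]$ must have a crossing point on the closed arc $\overline{\mathrm{arc}[p,q]}$, and each circle point lies on at most three such arcs, so each line pierces at most $6$ sets. The paper then finishes in two lines by exploiting \emph{slack} rather than rigidity: a point lies on three arcs only if it is some $p_i$ with $4\le i\le 3(k-1)$, and $\mathrm{arc}[p_1^r,p_3^\ell]$ contains no such point, so whichever line pierces $[p_1^r,p_3^\ell]$ pierces at most $5$ sets, whence $6(a-1)+5\ge 3(k-1)$ and $a>\frac{k-1}{2}$. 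You instead run the tight count $6m=|\F|$ (valid since $k$ is odd in this construction, matching the paper's setup), extract the exact-hitting configuration $S$, and kill it with the window/parity computation; I verified the combinatorics — the triple-window starts $\{6,\dots,3k-1\}$ (cyclic), the three quadruple windows $D=\{3k,1,2,3\}$, $A=\{2,3,4,5\}$, $B=\{4,5,6,7\}$, the identities $x_{3k}=x_6$ and $x_1=x_7$, and $D+B-A$ forcing $2(x_6+x_7)=1$ — and it is sound, as are your rigidity assertions, though ``the crossing points are distinct'' and ``each arc contains exactly one point of $S$'' each deserve their one-line justification (a shared crossing point would make its three incident segments doubly pierced, contradicting the exact cover; and the incidence count $3|S|=3(k-1)=|\F|$ pins each arc at exactly one point). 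Be aware, however, that your own rigidity step already hands you the paper's shortcut: equality forces every crossing point to have arc-coverage exactly $3$, hence to be some $p_i$ with $4\le i\le 3(k-1)$, and then window $A$, which consists entirely of cluster positions, contains no point of $S$ at all — an immediate contradiction with no parity needed. What your longer argument buys is a strictly stronger combinatorial fact: \emph{no} exact-hitting set $S\subseteq P$ exists, not merely none supported on coverage-$3$ points, which substantiates your closing remark that the clusters (and not just the raw count) are what break the period-three configuration; but as a proof of the lemma itself it is a longer road to the same destination.
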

\begin{proof}
Notice that for any $F\in \F$, if a line $L$ pierces $F$, then $L$ intersects arc$F$. Any point on the unit circle is contained in arc$F$ for at most 3 sets $F\in \F$. If a point is contained in arc$F$ for 3 such sets $F\in \F$, then this point must be of the form $p_i$ for some $4\leq i\leq 3(k-1)$. Since a line intersects the unit circle in at most 2 points, any line intersects at most 6 sets in $\F$. 

Now, since  arc$[p_1^r,p_3^\ell]$ does not contain $p_i$ for any $4\leq i\leq 3(k-1)$, there is no line that intersects $[p_1^r,p_3^\ell]$ and intersects 6 sets in $\F$.

It follows that if $a$ lines pierce $\F$, then $6(a-1)+5\geq 3(k-1)$, so $a > \frac{k-1}{2} = \left\lceil \frac{k}{2} \right\rceil-1$. This completes the proof.
\end{proof}

\section{The upper bound}\label{upper}

In this section, we prove that every $C(k)$-free family $\F$ can be pierced by $k-2$ lines, and again, we may assume that $k\geq 5$.
Because the sets of $\F$ are compact, it is the case that if every finite subfamily of $\F$ is $T^n$, then $\F$ is $T^n$. This is stated for instance in \cite{eckhoffgallai}. Therefore, throughout this section we may assume that $\F$ is finite, and thus, we may scale the plane so that each set of $\F$ is contained in the open unit disk. 

First, we will need to introduce a topological tool known as the KKM Theorem \cite{knaster1929}.

Let $\Delta^{n-1} = \{(x_1,\dots,x_n) \in \R^n \mid x_i\ge 0, \sum_{i=1}^n x_i = 1\}$ denote the $(n-1)$-dimensional simplex in $\R^n$, whose vertices are the canonical basis vectors $e_1,\dots,e_n$. A face $\sigma$ of $\Delta^{n-1}$ is a subset of $\Delta^{n-1}$ of the form $\conv(\{e_i : i\in I\})$ for some $I\subset [n]$.
\begin{theorem}
Let $A_1,\dots,A_{n}$ be open sets such that for every face $\sigma$ of $\Delta^{n-1}$ we have $\sigma \subset \bigcup_{e_j \in \sigma} A_j$.  Then we have that $\cap_{i=1}^{n} A_{i}\neq \emptyset$. 
\end{theorem}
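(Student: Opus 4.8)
The plan is to prove this statement, the Knaster--Kuratowski--Mazurkiewicz theorem, using Sperner's lemma together with a passage from the combinatorial conclusion to a genuine interior point. The one place the hypothesis enters is the following observation: for $x=(x_1,\dots,x_n)\in\Delta^{n-1}$, the minimal face containing $x$ is $\sigma(x)=\conv(\{e_j : x_j>0\})$, and applying the covering hypothesis to $\sigma(x)$ shows that $x\in A_j$ for some index $j$ with $x_j>0$.

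First I would treat the case where the sets $A_j$ are closed, as this is the form suited to Sperner's lemma. Take a sequence of triangulations of $\Delta^{n-1}$ with mesh tending to $0$, say iterated barycentric subdivisions. Label each vertex $v$ by an index $\lambda(v)$ with $v\in A_{\lambda(v)}$ and the $\lambda(v)$-th barycentric coordinate of $v$ positive; such an index exists by the observation above. Since $\lambda(v)$ always belongs to the vertex-index set of the minimal face containing $v$, this is a Sperner labeling, so Sperner's lemma yields in each triangulation a fully-labeled simplex, whose $n$ vertices $v_1,\dots,v_n$ satisfy $v_j\in A_j$ for all $j$. As the mesh tends to $0$ and $\Delta^{n-1}$ is compact, a subsequence of these simplices shrinks to a single point $p$, and $v_j\to p$ forces $p\in\overline{A_j}=A_j$ for every $j$; hence $p\in\bigcap_{j=1}^n A_j$.

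The open case is where the real difficulty lies, since the limiting argument above would only place $p$ in $\bigcap_j\overline{A_j}$. Here I would convert the open cover into data to which the closed case (equivalently, the no-retraction theorem underlying Sperner's lemma) applies. Set $d_j(x)=\mathrm{dist}(x,\Delta^{n-1}\setminus A_j)$ and $w_j(x)=x_j\,d_j(x)$, so that $w_j$ is continuous, nonnegative, and strictly positive exactly when $x_j>0$ and $x\in A_j$. By the observation, some $w_j(x)$ is positive at every $x$, so $W=\sum_j w_j$ is continuous and strictly positive on the compact set $\Delta^{n-1}$, and $\phi_j=w_j/W$ is a partition of unity with $\phi_j(x)>0$ implying $x\in A_j$. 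The map $f(x)=\sum_j\phi_j(x)e_j$ is then continuous and sends the minimal face of each $x$ into itself, because $x_j=0$ forces $\phi_j(x)=0$.

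To conclude, it suffices to find $x$ with $f(x)$ in the interior of $\Delta^{n-1}$: for such $x$ every $\phi_j(x)>0$, whence $x\in\bigcap_j A_j$. If no such $x$ existed, $f$ would map $\Delta^{n-1}$ into $\partial\Delta^{n-1}$; but the straight-line homotopy $(1-t)f(x)+tx$ stays inside the minimal face of $x$ and so shows $f|_{\partial\Delta^{n-1}}$ is homotopic to the identity of $\partial\Delta^{n-1}\cong S^{n-2}$, while the existence of the extension $f\colon\Delta^{n-1}\to\partial\Delta^{n-1}$ would make this restriction null-homotopic --- a contradiction. I expect this open-to-interior step to be the main obstacle, and the partition-of-unity device above is precisely what makes the face-preserving map $f$ available so that the non-contractibility of the boundary sphere can be invoked.
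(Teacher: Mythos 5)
Your proposal is correct, but note first that the paper itself contains no proof of this statement: it is quoted as the known KKM theorem \cite{knaster1929}, with the open-set version attributed to Lassonde \cite{openkkm}, so there is no internal argument to compare against. Your proof is a sound, self-contained derivation of the open version. The key observation (applying the hypothesis to the minimal face $\sigma(x)$ yields $x\in A_j$ for some $j$ with $x_j>0$), the strict positivity of $W=\sum_j w_j$, the face-preservation of $f$ (each $x_j=0$ forces $\phi_j(x)=0$, so $f$ maps every face into itself), and the final homotopy contradiction are all correct; this is essentially the Lassonde-style partition-of-unity reduction to the non-contractibility of $\partial\Delta^{n-1}$, rather than the more common route of shrinking the open sets to compact subsets satisfying the same covering condition and invoking the closed KKM theorem. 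Three small points to tidy. First, if some $A_j\supseteq \Delta^{n-1}$ then $\Delta^{n-1}\setminus A_j=\emptyset$ and $d_j(x)=\mathrm{dist}(x,\emptyset)$ needs a convention (e.g.\ set $d_j\equiv 1$ in that case); with the complement nonempty, $d_j$ is $1$-Lipschitz and your continuity claim stands. Second, your Sperner/closed-case paragraph is never actually used by the open-case argument: the partition-of-unity step invokes only the fact that the identity of $\partial\Delta^{n-1}\cong S^{n-2}$ is not null-homotopic, so you could either delete the first half or instead deduce the open case from the closed case directly. Third, the low-dimensional boundary cases deserve a word: $n=1$ is trivial since the hypothesis gives $\Delta^0\subset A_1$, and for $n=2$ the sphere $\partial\Delta^1=S^0$ is degenerate, though your argument still closes there (a continuous $f:\Delta^1\to S^0$ would be constant by connectedness, contradicting $f(e_j)=e_j$, which follows from face-preservation). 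None of these affects correctness.
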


We remark that the original KKM Theorem was stated for when the sets $A_i$ are closed, and the statement where the $A_i$'s are open as stated here appears in e.g. \cite{openkkm}.

Let $U$ be the unit circle, and let $f:[0,1] \rightarrow U$ be a parameterization of $U$ defined by $f(t)=(\textrm{cos}(2\pi t), \textrm{sin}(2\pi t))$. 

A point $x=(x_1,\dots,x_{2(k-2)})\in \Delta^{2(k-2)-1}$ corresponds to $2(k-2)$ points on $U$ given by $f_i(x)=f(\sum_{j=1}^i x_{j})$ for $0\leq i\leq 2(k-2)$ (note that $f_0(x)=f_{2(k-2)}(x)$). We define the line segments $\ell_i(x)=[f_i(x),f_{i+k-2}(x)]$ for $0\leq i\leq 2(k-2)-1$ where addition is take modulo $2(k-2)$ (see Figure \ref{fig:R1}). Note that $\ell_i(x)=\ell_{i+k-2}(x)$. 

For $1\leq i\leq k-3$, we define the region $R^i_x$ to be the open set bounded by the lines $\ell_{i}(x)$ and $\ell_{i-1}(x)$ and by the arc from $f_{i-1}(x)$ to $f_i(x)$. For $k-2\leq i\leq 2(k-2)$, we define $R^i_x$  to be the intersection of the region in the open unit disk bounded by $\ell_{i-1}(x)$, $\ell_{i}(x)$ and the arc from $f_{i-1}(x)$ to $f_i(x)$ with the open halfspaces defined by $\ell_j(x)$ for those $1\leq j\leq 2(k-2)$ for which $\ell_j(x)$ is a line segment (and not a point) containing the open arc from $f_{i-1}(x)$ to $f_i(x)$ (see Figures \ref{fig:R1} - \ref{fig:R6}). 

\begin{lemma}\label{extra_region}
Assume that each $R^i_x$ is non-empty. Let $Q_1$ be the open quadrant defined by the lines $\ell_{k-3}(x)$ and $\ell_{k-2}(x)$ that contains the open arc along the unit circle from $f_0(x)$ to $f_{k-3}(x)$. There is some $1\leq j\leq k-3$ such that $R^j_x$ is contained in $Q_1$.
\end{lemma}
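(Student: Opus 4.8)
The plan is to reduce the two–dimensional containment $R^j_x \subseteq Q_1$ to a statement about a single point (the inner vertex of $R^j_x$) and then to locate such a vertex inside $\overline{Q_1}$ by a short ``descent'' argument. Write $m = k-2$ throughout.

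First I would set up $Q_1$ geometrically. The $m$ distinct chords $\ell_0(x),\dots,\ell_{m-1}(x)$ pairwise cross, since for $0\le i<j\le m-1$ the endpoints $f_i,f_{i+m}$ and $f_j,f_{j+m}$ interleave on $U$. In particular $c:=\ell_0(x)\cap\ell_{m-1}(x)$ is the apex of $Q_1$, and $T:=Q_1\cap(\text{open disk})$ is the convex circular triangle with corners $A:=f_0(x)$, $C:=f_{m-1}(x)$, $B:=c$, whose sides are the arc from $f_0$ to $f_{m-1}$, the segment $[A,B]\subseteq\ell_0(x)$, and $[C,B]\subseteq\ell_{m-1}(x)$. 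For $1\le j\le m-1$ the region $R^j_x$ is the curvilinear triangle bounded by $\ell_{j-1}(x),\ell_j(x)$ and the arc from $f_{j-1}$ to $f_j$, whose two straight sides meet at the inner vertex $v_{j-1}:=\ell_{j-1}(x)\cap\ell_j(x)$. Since that arc lies in the arc–side of $T$, it lies in $\overline{T}$; hence $R^j_x\subseteq\conv(\{v_{j-1}\}\cup\mathrm{arc}(f_{j-1},f_j))\subseteq\overline{T}$ as soon as $v_{j-1}\in\overline{T}$, by convexity of $\overline{T}$. As $R^j_x$ is open this forces $R^j_x\subseteq\mathrm{int}\,\overline{T}=T\subseteq Q_1$. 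So it suffices to produce one $t\in\{0,\dots,m-2\}$ with $v_t\in\overline{T}$.

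The heart of the argument is a descent argument on the exit points of the chords. Each chord $\ell_i(x)$ meets $\overline{T}$ in a single segment from $f_i$ on the arc (with $f_0=A$, $f_{m-1}=C$) to an exit point $e_i$ on the broken boundary $A\to B\to C$: for $1\le i\le m-2$ the far endpoint $f_{i+m}$ lies beyond both $\ell_0(x)$ and $\ell_{m-1}(x)$, so $e_i$ lies on $[A,B]$ or $[B,C]$, while $e_0=e_{m-1}=B$. Parametrizing this broken boundary monotonically from $A$ to $C$ and letting $s_i$ be the parameter of $e_i$, a four–point interleaving check shows that consecutive chords $\ell_t(x),\ell_{t+1}(x)$ cross inside $\overline{T}$ (that is, $v_t\in\overline{T}$) exactly when $s_t>s_{t+1}$. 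Now the sequence $s_0,\dots,s_{m-1}$ begins and ends at the parameter value of $B$; if it had no descent it would be nondecreasing with equal endpoints, hence constant, which means every chord passes through $c$. Thus either some $s_t>s_{t+1}$, yielding the desired $v_t\in\overline{T}$, or all chords are concurrent at $c$, in which case each $R^j_x$ is the cone at $c$ between the rays $cf_{j-1}$ and $cf_j$, both lying inside the cone $Q_1$, so in fact every $R^j_x\subseteq Q_1$.

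I expect the main obstacle to be the bookkeeping in this descent step: checking that the curvilinear region $R^j_x$ really is controlled by the single exit parameter $s_i$, and in particular handling the two boundary chords $\ell_0(x),\ell_{m-1}(x)$ (which degenerate to the sides of $T$, with exit at $B$) uniformly with the interior chords, so that the ``crossing $\Leftrightarrow$ descent'' equivalence also holds at $t=0$ and $t=m-2$. The convexity reduction and the description of $T$ are routine. In this approach the hypothesis that every $R^i_x$ is nonempty is used only to secure the non-degeneracy it depends on—distinct points $f_i$, genuine (non-point) chords, and a well-defined apex $c=\ell_0(x)\cap\ell_{m-1}(x)$ with the arc from $f_0$ to $f_{m-1}$ sitting in a single open quadrant—so I would verify that nonemptiness indeed guarantees these conditions.
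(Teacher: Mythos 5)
Your proof is correct, and while it shares the paper's underlying strategy --- locate two consecutive chords $\ell_{t}(x),\ell_{t+1}(x)$ whose crossing point lies in $\overline{Q_1}$, via a discrete pigeonhole over the sequence $\ell_0(x),\dots,\ell_{k-3}(x)$ --- your bookkeeping is genuinely different. The paper tracks only the coarse dichotomy of which opposite quadrant each chord pokes into after leaving $Q_1$: it takes the \emph{smallest} index $j$ with $\ell_j(x)$ meeting $Q_4$ (which exists unless $R^{k-3}_x\subseteq Q_1$ already), notes $j>1$ since $\ell_1(x)$ meets $Q_2$ (else $R^1_x\subseteq Q_1$), and concludes that the switching pair $\ell_{j-1}(x),\ell_j(x)$ must cross in $Q_1$. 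In your language, ``meets $Q_4$'' is exactly ``exit through $[A,B]$, i.e.\ $s_j<s(B)$,'' and ``meets $Q_2$ or the apex'' is ``$s_{j-1}\ge s(B)$,'' so the paper finds a descent specifically across the apex value $s(B)$, whereas you find an arbitrary descent of the full exit parameter. Your finer invariant buys two things the paper leaves implicit: the convexity reduction showing $v_t\in\overline{T}$ already forces the open curvilinear region $R^{t+1}_x$ into the open quadrant $Q_1$ (the paper jumps from ``$\ell_{j-1}(x)$ and $\ell_{j}(x)$ intersect in $Q_1$'' directly to ``$R^j_x\subseteq Q_1$''), and an explicit treatment of the degenerate case where all chords are concurrent at the apex, which the paper silently absorbs into its first step (in that case $R^{k-3}_x\subseteq Q_1$ outright). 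The paper's version is shorter; yours is more systematic and localizes all the geometry in one interleaving criterion on the boundary of the convex region $\overline{T}$.

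Two minor points. Your equivalence ``cross inside $\overline{T}$ exactly when $s_t>s_{t+1}$'' should be weak: if $s_t=s_{t+1}$ then $e_t=e_{t+1}$ is the unique intersection point of the two chords, so $v_t=e_t\in\overline{T}$ and they do cross there. This only helps you, and your no-descent case is unaffected, since a nondecreasing sequence with $s_0=s_{k-3}=s(B)$ is constant. Second, the nondegeneracy you flag at the end does follow immediately from the hypothesis, as the paper itself observes: if $x_i=0$ then $f_{i-1}(x)=f_i(x)$ and $R^i_x=\emptyset$, so nonemptiness of every $R^i_x$ forces all $2(k-2)$ points $f_i(x)$ to be distinct, whence every $\ell_i(x)$ is a genuine chord, any two of them cross (their endpoints strictly interleave on the circle), and the apex $c=\ell_0(x)\cap\ell_{k-3}(x)$ and the triangle $T$ are well defined.
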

\begin{proof}
Let $Q_2,Q_3,Q_4$ be the remaining quadrants defined by $\ell_{k-3}(x)$ and $\ell_{k-2}(x)$ so that $Q_1,Q_2,Q_3,Q_4$ occur in counterclockwise order (see Figure \ref{fig:quadrants}). Since the regions $R^i_x$ are non-empty, the $Q_i$'s are non-empty. Assume that $R^{k-3}_x$ is not contained in $Q_1$, then $\ell_{k-4}(x)$ intersects $Q_4$. Similarly, if we assume that $R^1_x$ is not contained in $Q_1$, then $\ell_1(x)$ intersects $Q_2$. If $k=5$, then we are done since $k-4=1$ and $\ell_1(x)$ cannot intersect both $Q_2$ and $Q_4$. Otherwise, $k\geq 6$, and we take $j$ to be the smallest index such that $\ell_j(x)$ intersects $Q_4$. Such an index exists since we assume that $\ell_{k-4}(x)$ intersects $Q_4$. Also, $j>1$, since we assume that $\ell_1(x)$ intersects $Q_2$ (and hence does not intersect $Q_4$). Therefore $j-1\geq 1$ and $\ell_{j-1}(x)$ intersects $Q_2$, or the intersection of $\ell_{k-2}(x)$ and $\ell_{k-3}(x)$. This implies that $\ell_{j-1}(x)$ and $\ell_{j}(x)$ intersect in $Q_1$ (see Figure \ref{fig:quadrants}). Therefore, $R^j_x$ is contained in $Q_1$. 
\end{proof}

\begin{lemma}\label{covers}
If a connected set $F$ contained in the unit disc does not intersect any $\ell_j(x)$, then $F$ is contained in $R^i_x$ for some $i$.
\end{lemma}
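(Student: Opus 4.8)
The plan is to reduce the statement to a single covering property of the regions and then finish by a connectedness argument. First I would record the elementary topological fact that drives everything. The union $\bigcup_j \ell_j(x)$ of the chords is a closed subset of the closed unit disk, so its complement $U$ in the open unit disk is open, and a connected set $F \subseteq U$ must lie in a single connected component of $U$. It therefore suffices to prove that every connected component of $U$ is contained in some $R^i_x$.

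Next I would isolate the two facts about the regions that make the connectedness argument go through. The first is that the boundary of each $R^i_x$ meeting the open disk lies in $\bigcup_j \ell_j(x)$: each $R^i_x$ is cut out by the lines carrying $\ell_{i-1}(x)$ and $\ell_i(x)$ (and, for $k-2 \le i \le 2(k-2)$, by the additional lines carrying the $\ell_j(x)$ used in the halfspace intersections) together with a bounding arc, and inside the open disk each of these lines coincides with the corresponding chord, while the arc lies on the boundary circle and so does not meet $U$. Consequently $\partial R^i_x \cap U = \emptyset$, so for a connected component $C$ of $U$ the set $C \cap R^i_x$ is both relatively open and relatively closed in $C$; since $C$ is connected it is either empty or all of $C$. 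The second fact is the covering statement $U \subseteq \bigcup_i R^i_x$. Granting it, each component $C$ meets some $R^i_x$ and is therefore contained in it, whence $F \subseteq R^i_x$, as desired.

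The main obstacle is the covering $U \subseteq \bigcup_i R^i_x$, which is where the precise shape of the regions matters. For a component adjacent to a boundary arc from $f_{i-1}(x)$ to $f_i(x)$ this is immediate, since such a component lies in the wedge opening toward that arc, which is $R^i_x$. The real work is to account for the components that do not touch the circle, i.e.\ the interior cells of the chord arrangement. Here I would argue that every interior cell is swept up by one of the large wedges $R^i_x$ with $1 \le i \le k-3$: these are defined \emph{without} the halfspace trimming, so they extend past the crossing points of consecutive chords into the interior, while the halfspace intersections in the definition of $R^i_x$ for $k-2 \le i \le 2(k-2)$ are exactly what trims the remaining wedges back so that no cell is claimed by more than one region. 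I expect the delicate points to be (i) verifying that the interior cells genuinely fall inside the untrimmed wedges, for which a side‑of‑each‑chord sign‑vector bookkeeping, in the spirit of Lemma \ref{extra_region}, is the natural tool; and (ii) handling the degenerate configurations in which some $\ell_j(x)$ collapses to a point, which is precisely the situation the definition guards against by restricting the halfspace intersections to those $\ell_j(x)$ that are honest segments, and which must be checked to ensure the covering survives.
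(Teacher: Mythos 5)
Your reduction is sound as far as it goes: $F$ is connected and avoids the closed set $\bigcup_j \ell_j(x)$, so it lies in a single cell of the chord arrangement inside the open disk, and since the portion of each $\partial R^i_x$ meeting the open disk lies on the chords, it would indeed suffice to show that every such cell is contained in some $R^i_x$. The cells adjacent to an arc are, as you say, immediate: for $k-2\le i\le 2(k-2)$ the trimming halfspaces all contain the arc from $f_{i-1}(x)$ to $f_i(x)$, so $R^i_x$ is precisely the cell adjacent to that arc, and for $1\le i\le k-3$ that cell is a subset of the untrimmed wedge. But the covering of the \emph{interior} cells is the entire content of the lemma, and your proposal stops exactly there: ``I would argue,'' ``sign-vector bookkeeping \dots is the natural tool,'' and ``I expect the delicate points to be \dots'' are statements of intent, not an argument. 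Nothing in the proposal establishes that an interior cell actually falls inside one of the wedges $R^i_x$, $1\le i\le k-3$. Your geometric gloss is also slightly off: such a wedge does not ``extend past the crossing point of consecutive chords'' --- the crossing of the lines through $\ell_{i-1}(x)$ and $\ell_i(x)$ is its vertex; what it does is extend past the \emph{other} chords $\ell_j(x)$, which is why it can absorb interior cells. And the disjointness remark (``no cell is claimed by more than one region'') is beside the point, since only the covering matters here.

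The missing step is carried in the paper by a short extremal-index argument that you would need to supply. The paper first places $F$ in an untrimmed region $\tilde{R}^i_x$ bounded by the arc and the lines $\ell_{i-1}(x)$, $\ell_i(x)$; if $k-2\le i\le 2(k-2)$ and $F\not\subseteq R^i_x$, then connectedness of $F$ (it misses every chord) yields some $\ell_j(x)$ with $F$ strictly inside the open halfspace \emph{not} containing the arc from $f_{i-1}(x)$ to $f_i(x)$. One then chooses $j$ extremal --- largest in $\{1,\dots,k-4\}$ when $i=2(k-2)$, smallest when $i=k-2$, and a two-sided variant via $i'=i-(k-2)$ in the remaining cases --- and checks that maximality (resp.\ minimality) of $j$ forces $F$ into the other bounding halfspace of the adjacent wedge, so that $F\subseteq R^{j+1}_x$ (resp.\ $R^j_x$). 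This verification, together with the observation that degenerate (single-point) chords separate nothing and may be ignored --- your point (ii), likewise left unchecked --- is exactly what your ``delicate points'' amount to. As it stands, the proposal is an outline of the lemma rather than a proof of it.
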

\begin{proof}
Let $\tilde{R}^i_x$ be the region bounded by the arc from $f_{i-1}(x)$ to $f_i(x)$ and by the lines $\ell_{i-1}(x)$ and $\ell_{i}(x)$. Note that $\tilde{R}^i_x=R^i_x$ for $1\leq i\leq k-3$. Also, we have that $F$ is contained in $\tilde{R}^i_x$ for some $i$. If $1\leq i\leq k-3$, then we are done since $\tilde{R}^i_x=R^i_x$. So assume that $i\geq k-2$ and $F$ is not contained in $R^i_x$. Since $F$ does not intersect any $\ell_j(x)$, there is some $j$ such that $F$ is contained in the open halfspace defined by $\ell_j(x)$ that does not contain the arc from $f_{i-1}(x)$ to $f_i(x)$. 

If $i=2(k-2)$, then choose the largest index $j\in \{1,\dots, k-4\}$ such that the open halfspace defined by $\ell_j(x)$ not containing the arc from $f_{2(k-2)-1}(x)$ to $f_0(x)$ contains $F$. Then $F$ is contained in $R^{j+1}_x$. This is because $R^{j+1}_x$ is the region in the open unit disk obtained by taking intersection of the open halfspace defined by $\ell_{j}(x)$ that does not contain the arc from $f_{2(k-2)-1}(x)$ to $f_0(x)$ (which contains $F$) with the open halfspace defined by $\ell_{j+1}(x)$ that contains the arc from $f_{2(k-2)-1}(x)$ to $f_0(x)$ (which contains $F$ by the maximality of $j$).

If $i=k-2$, then choose the smallest index $j\in \{1,\dots, k-4\}$ such that the halfspace defined by $\ell_j(x)$ not containing the arc from $f_{k-3}(x)$ to $f_{k-2}(x)$ contains $F$. Then $F$ is contained in $R^{j}_x$ (by similar reasoning as above).

Hence, we may assume that $i\neq k-2,2(k-2)$. Let $i'=i-(k-2)$. If there is some $u\in \{0,\dots,i'-2\}$ such that the halfspace defined by $\ell_u(x)$ not containing the arc from $f_{i-1}(x)$ to $f_{i}(x)$ contains $F$, then let $j$ be the largest such index. Then $F$ is contained in $R^{j+1}_x$. Otherwise, choose the smallest index $j\in \{k-3,\dots,i'+1\}$ such that  the halfspace defined by $\ell_j(x)$ not containing the arc from $f_{i-1}(x)$ to $f_{i}(x)$ contains $F$. Then $F$ is contained in $R^j_x$.

This completes the proof.
\end{proof}

With the goal of using the KKM Theorem, we define $A_i$ to be the set of points $x\in \Delta^{2(k-2)-1}$ for which $R^i_x$ contains a set in $\F$. Because $R^i_x$ is open and each set in $\F$ is closed, it follows that $A_i$ is open. Let us assume for contradiction that there is no point $x\in \Delta^{2(k-2)-1}$ for which the lines $\ell_j(x)$, $0\leq j\leq k-3$ pierce $\F$. Then by Lemma \ref{covers}, for each $x\in \Delta^{2(k-2)-1}$, there is some region $R^i_x$ that contains a set in $\F$. It follows that $\Delta^{2(k-2)-1}\subset \cup_{i=1}^{2(k-2)}A_i$. Also, it is clear that for $x=(x_1,\dots,x_{2(k-2)})$, if $x_i=0$, then the region $R^i_x$ is empty and hence $x\notin A_i$. It follows from this fact that the sets $A_i$ satisfy the conditions of the KKM Theorem.

Therefore, by the KKM Theorem, there exists a point $x\in \cap_{i=1}^{2(k-2)}A_i$. Notice in particular that each $R^i_x$ is non-empty.

Let $1\leq i\leq k-3$ be the index such that $R^i_x$ is contained in $Q_1$, guaranteed by Lemma \ref{extra_region} where $Q_1$ is defined as in Lemma \ref{extra_region}.

Let $F_1$ be the set in $\F$ contained in $R^i_x$, and let $F_j$ be the set of $\F$ contained in $R^{k-4+j}_x$ for $2\leq j\leq k$. Note that the corresponding regions $R^i_x$ and $R^j_x$ $2\leq j\leq k$ are disjoint, so $F_1,\dots,F_k$ are pairwise distinct. 

Now, $\conv(F_1\cup F_2)$ is separated from $F_3,\dots,F_k$ by the line $\ell_0(x)$, so $\conv(F_1\cup F_2)$ is disjoint from $\conv(F_u\cup F_{u+1})$ for all $3\leq u\leq k-1$. For $j\in \{2,\dots,k-1\}$, $\conv(F_j\cup F_{j+1})$ is separated from $F_{j+2},\dots, F_k$ by $\ell_{k-4+(j+1)}(x)$, so $\conv(F_j\cup F_{j+1})$ is disjoint from $\conv(F_u\cup F_{u+1})$ for all $j+2\leq u\leq k-1$. Finally, $\conv(F_k\cup F_{1})$ is separated from $F_2,\dots,f_{k-1}$ by $\ell_{k-3}(x)$, so $\conv(F_k\cup F_{1})$ is disjoint from $\conv(F_u\cup F_{u+1})$ for all $2\leq u\leq k-2$. It follows that the sets $F_1,\dots,F_k$ form a $C(k)$, a contradiction.

This completes the proof of Theorem \ref{main}. 

\section{Concluding Conjecture}

We present a conjecture for the correct value of $L(k)$, which states that the lower bound of Theorem \ref{main} is correct.

\begin{conjecture}
We have that $L(k)=\left\lceil \frac{k}{2} \right\rceil$.
\end{conjecture}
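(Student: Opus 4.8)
The plan is to adapt the KKM machinery of Section~\ref{upper} so that it runs with only $m=\lceil k/2\rceil$ lines in place of $k-2$. One would parameterize $2m$ points $f_0(x),\dots,f_{2m-1}(x)$ on the unit circle by a point $x\in\Delta^{2m-1}$ exactly as before, and take the $m$ long chords $\ell_i(x)=[f_i(x),f_{i+m}(x)]$ for $0\le i\le m-1$, which cut the open disk into $2m$ outer regions $R^1_x,\dots,R^{2m}_x$, each bounded by two consecutive chords and one boundary arc. With the obvious analogues of Lemmas~\ref{covers} and~\ref{extra_region}, one defines $A_i$ to be the set of $x$ for which $R^i_x$ contains a member of $\F$, checks that the $A_i$ are open and that $x_i=0$ forces $x\notin A_i$, and concludes that the $A_i$ satisfy the hypotheses of the KKM Theorem. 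If no choice of $x$ lets the $m$ lines pierce $\F$, then KKM produces a point $x$ at which every one of the $2m$ regions contains a set of $\F$.

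The combinatorial heart would be to extract a $C(k)$ from these $2m$ occupied regions. When $k$ is even we have $2m=k$ and would use all the regions; when $k$ is odd we have $2m=k+1$ and would need to discard one region and close the cyclic pattern through the resulting gap. Writing $F_i$ for a set in $R^i_x$, one wants to show that the consecutive hulls $\conv(F_i\cup F_{i+1})$ form a non-self-crossing loop, that is, $\conv(F_i\cup F_{i+1})\cap\conv(F_j\cup F_{j+1})=\emptyset$ whenever $\{i,i+1\}\cap\{j,j+1\}=\emptyset$. As in the proof of Theorem~\ref{main}, one would try to exhibit, for each such non-adjacent pair, one of the chords $\ell_t(x)$ that strictly separates $\conv(F_i\cup F_{i+1})$ from $\conv(F_j\cup F_{j+1})$.

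I expect this separation step to be the \emph{main obstacle}, and indeed to be the reason the statement is still only a conjecture. With $k-2$ lines the configuration is highly over-determined: there are roughly twice as many chords as selected sets, so for every non-adjacent pair of consecutive hulls one of the $\ell_t(x)$ lies ``between'' them and does the separating, exactly as carried out in the final paragraphs of Section~\ref{upper}. With only $m=\lceil k/2\rceil$ chords this slack disappears; each chord $\ell_i(x)$ splits the disk into two halves containing about $m$ regions apiece, and two consecutive hulls lying on the same side of every chord need not be separated by any $\ell_t(x)$. Thus the naive region argument no longer certifies the $C(k)$ condition, and one would have to inject extra geometric rigidity --- for instance, arguing that $C(k)$-freeness already forces the sets into a position where their cyclic order around the circle makes the hulls non-crossing, or replacing the fixed long chords by a more adaptive family of chords whose combinatorics still fit into a KKM scheme.

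A complementary line of attack would be induction on $k$: one would pierce most of $\F$ with $\lceil (k-1)/2\rceil$ lines and absorb the remaining sets with one extra line. The difficulty there is that $C(k)$-freeness is not inherited in any convenient way by the subfamily left after deleting the sets met by a single line, so showing that this residual family is, say, $C(k-2)$-free --- which is what the induction would require --- again seems to demand a genuinely new idea.
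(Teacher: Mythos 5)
This statement is the paper's concluding \emph{conjecture}: the paper gives no proof of $L(k)=\lceil k/2\rceil$, only the bounds $\lceil k/2\rceil\le L(k)\le k-2$ of Theorem~\ref{main}, and explicitly leaves the matching upper bound open. Your proposal, to its credit, does not pretend otherwise --- it is a program sketch that stalls exactly where you say it does, so it is not a proof and there is nothing in the paper to compare it against step by step. Your diagnosis of the obstruction is accurate and worth making precise. In the paper's proof of the $k-2$ upper bound, only $k$ of the $2(k-2)$ occupied regions are used (one region $R^i_x\subset Q_1$ with $i\le k-3$, plus $R^{k-2}_x,\dots,R^{2(k-2)}_x$), so there are $k-4$ regions of slack; this is what guarantees that for every non-adjacent pair of consecutive hulls some chord $\ell_t(x)$ lies strictly between them and certifies disjointness. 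With $m=\lceil k/2\rceil$ chords and $2m$ regions that slack is $0$ (for $k$ even) or $1$ (for $k$ odd), and the certificates genuinely disappear: taking $k=2m$ and $F_i\subset R^i_x$ for all $i$, the hull $\conv(F_i\cup F_{i+1})$ must cross the chord $\ell_i(x)$ near its endpoint $f_i(x)$, while the opposite hull $\conv(F_{i+m}\cup F_{i+m+1})$ crosses the same chord near its other endpoint $f_{i+m}(x)$; since every other chord of the configuration crosses $\ell_i(x)$ as well, no chord can separate these two hulls, and they may in fact intersect. So the KKM step still produces a point $x$ with all $2m$ regions occupied, but $2m$ occupied regions yield only $k$ sets in cyclic position around the circle, which does not contradict $C(k)$-freeness --- the extracted family need not be a $C(k)$ at all.

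Two smaller technical points if you ever try to push this through. First, your $m$ long chords $[f_i(x),f_{i+m}(x)]$ pairwise cross, so for $m\ge 3$ the arrangement has interior cells not touching the boundary arc; the analogue of Lemma~\ref{covers} (every $\F$-member avoiding all chords lies in some $R^i_x$) needs the same careful halfspace refinement the paper performs for $k-2\le i\le 2(k-2)$, and it is not automatic that a consistent refinement exists with no slack. Second, your inductive alternative faces exactly the difficulty you name: deleting the sets met by one line does not obviously convert $C(k)$-freeness into $C(k-2)$-freeness of the residue, since the deleted sets may be the ones that would have completed a forbidden cycle. Both observations are consistent with the author leaving the statement as a conjecture; closing the gap seems to require a genuinely new idea, e.g.\ an adaptive family of chords or a strengthened structural consequence of $C(k)$-freeness, rather than a tighter run of the same KKM argument.
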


\section{Acknowledgements}

The author would like to thank Shira Zerbib for commenting on a first draft of this paper.

%Although it is not considered in this paper, one may ask the corresponding fractional problem. Let $\gamma(n,k)$ be the largest fraction such that any $C(k)$-free family $\F$ with $n$ compact, convex sets has a $\gamma(n,k)$-transversal, and let $\gamma(k)=\lim_{n\rightarrow \infty}\gamma(n,k)$. A corollary of Theorem \ref{main} is that $\gamma(k)\geq \frac{1}{k-2}$. However, it is likely that this bound can be improved.

\begin{figure}
    \centering
    \includegraphics[scale=.25]{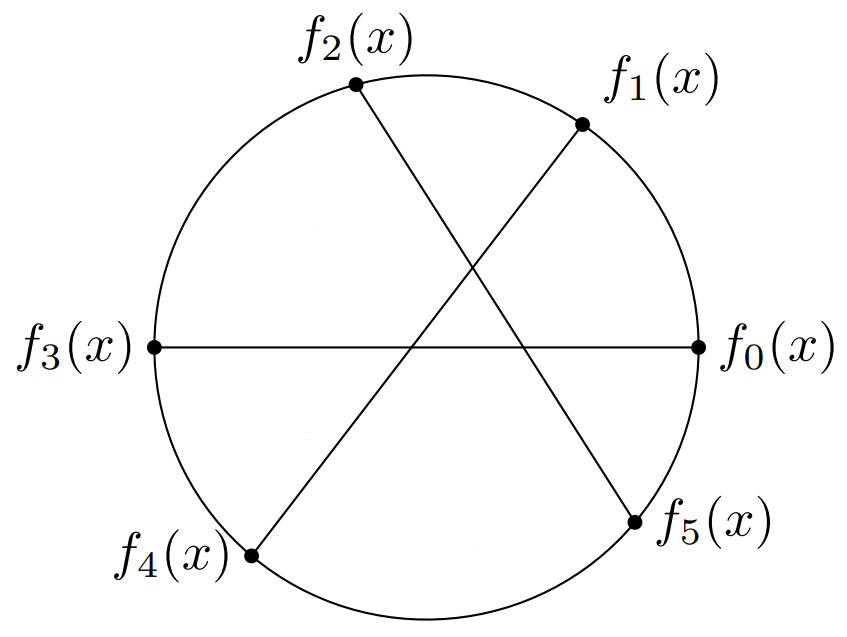}
    \includegraphics[scale=.25]{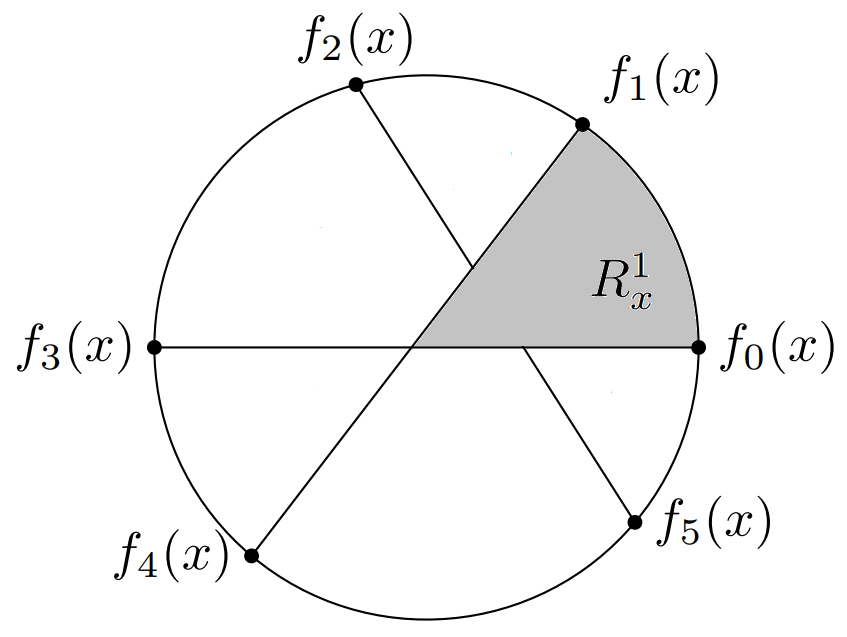}
    \caption{Three line segments corresponding to a point $x\in \Delta^5$ (on the left), occurring in the case $k=5$, and the region $R^1_x$ (on the right).}
    \label{fig:R1}
\end{figure}

\begin{figure}
    \centering
    \includegraphics[scale=.25]{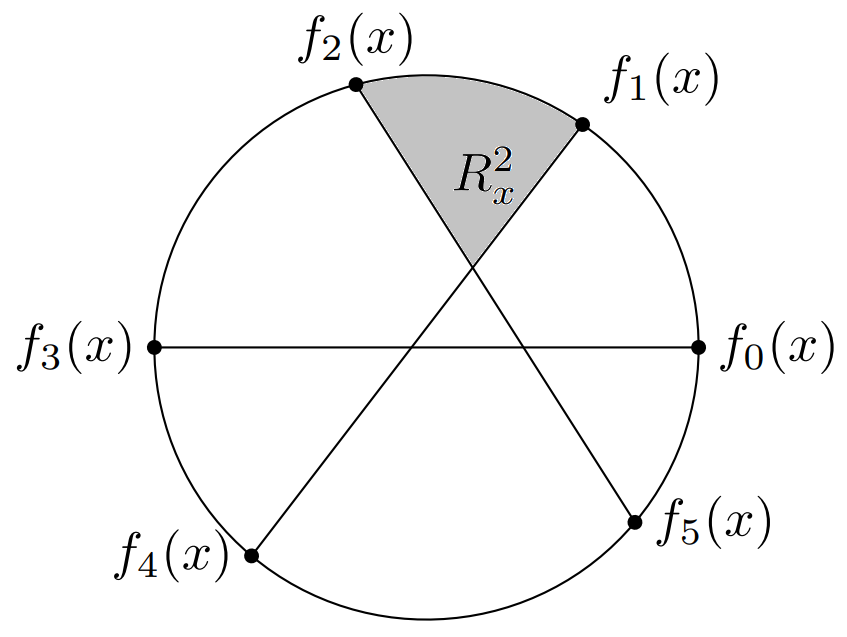}
    \includegraphics[scale=.25]{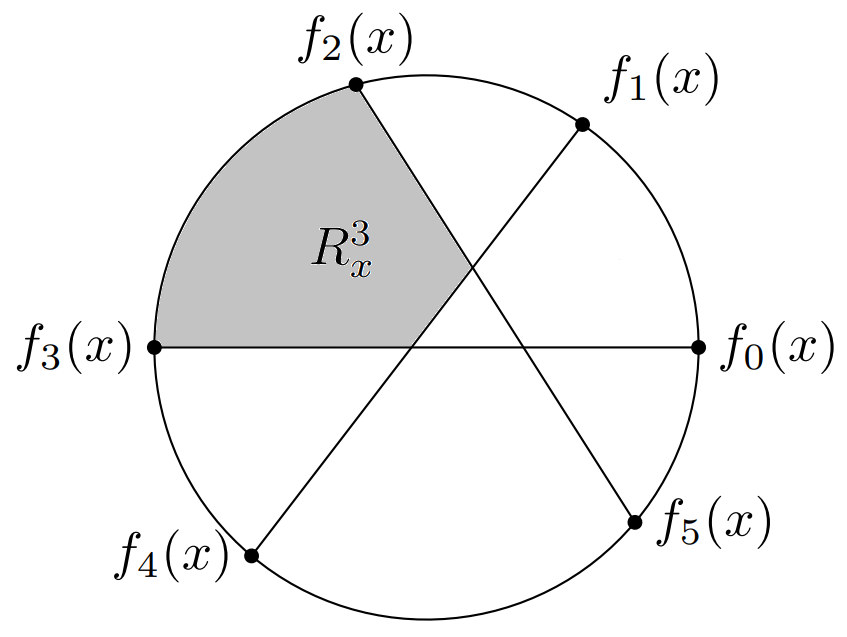}
    \caption{The regions $R^2_x$ (on the left) and $R^3_x$ (on the right).}
    \label{fig:R2R3}
\end{figure}

\begin{figure}
    \centering
    \includegraphics[scale=.25]{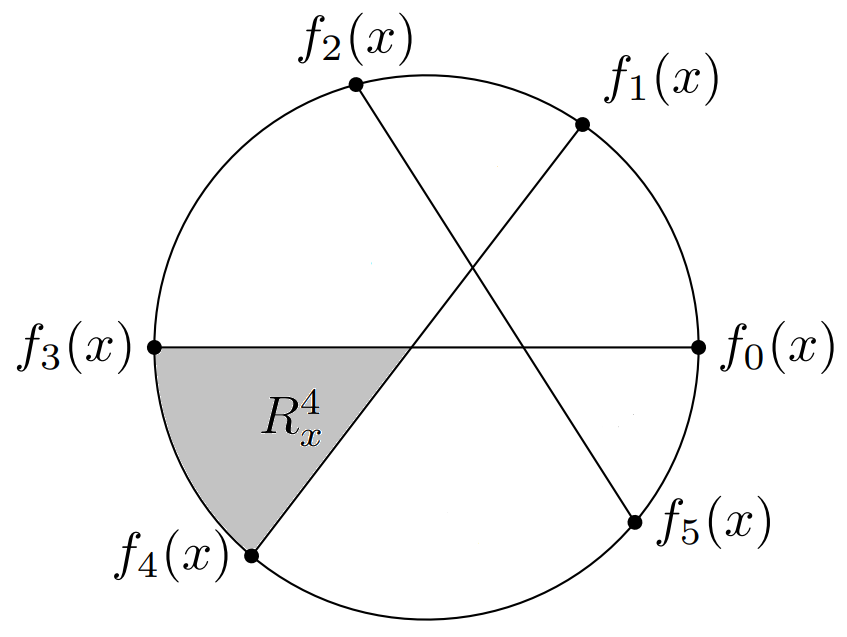}
    \includegraphics[scale=.25]{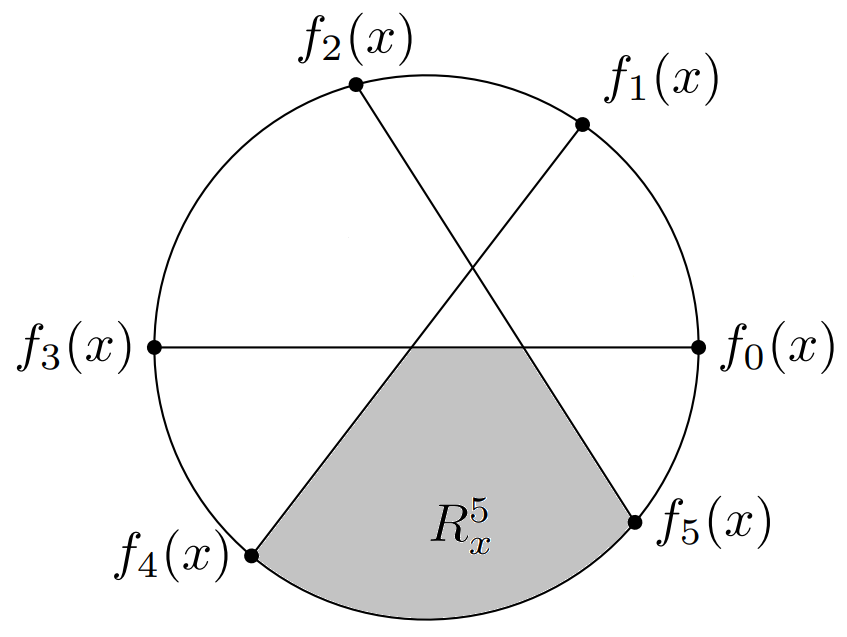}
    \caption{The regions $R^4_x$ (on the left) and $R^5_x$ (on the right).}
    \label{fig:R4R5}
\end{figure}

\begin{figure}
    \centering
    \includegraphics[scale=.25]{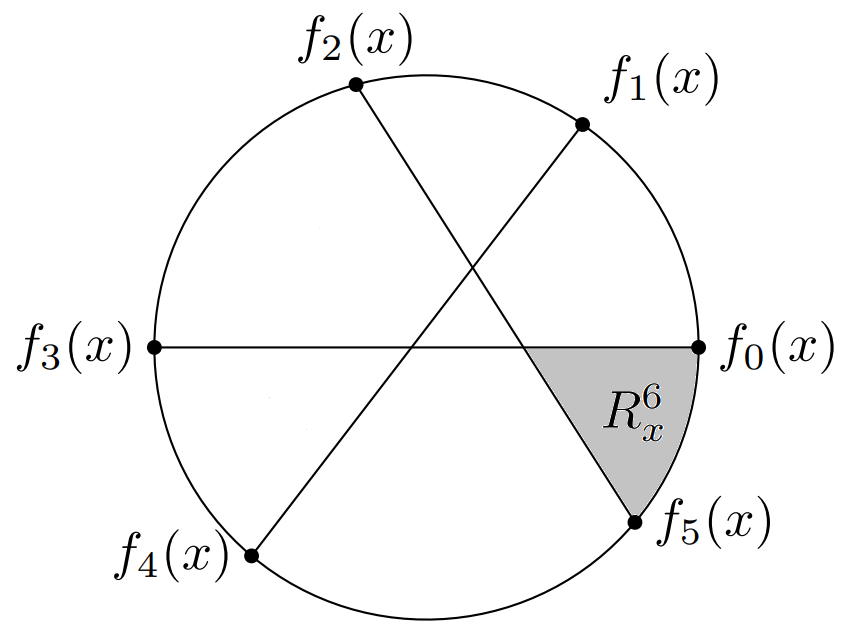}
    \caption{The region $R^6_x$.}
    \label{fig:R6}
\end{figure}

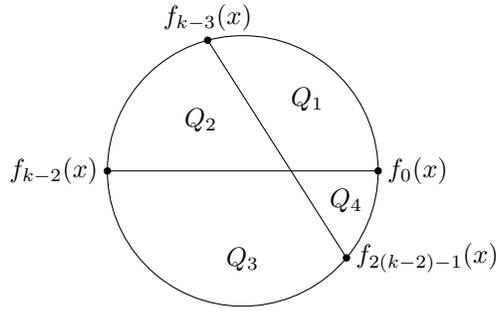
\begin{figure}
    \centering
    \begin{tikzpicture}[scale=.6,rotate=-40]
    \draw (0,0) circle (3cm);
\filldraw [black] (3,0) circle (2pt) node[right] {$f_{2(k-2)-1}(x)$};
\filldraw [black] (145:3cm) circle (2pt) node[above] {$f_{k-3}(x)$};
\filldraw [black] (40:3cm) circle (2pt) node[right] {$f_0(x)$};
\filldraw [black] (220:3cm) circle (2pt) node[left] {$f_{k-2}(x)$};

\draw (3,0) -- (145:3cm);
\draw (40:3cm) -- (220:3cm);

\filldraw [] (310:1.5cm) node[below] {$Q_3$};
\filldraw [] (35:2.3cm) node[below] {$Q_4$};
\filldraw [] (160:1.85cm) node[below] {$Q_2$};
\filldraw [] (95:2.5cm) node[below] {$Q_1$};

\end{tikzpicture}
    \caption{The quadrants $Q_1$, $Q_2$, $Q_3$, and $Q_4$.}
    \label{fig:quadrants}
\end{figure}

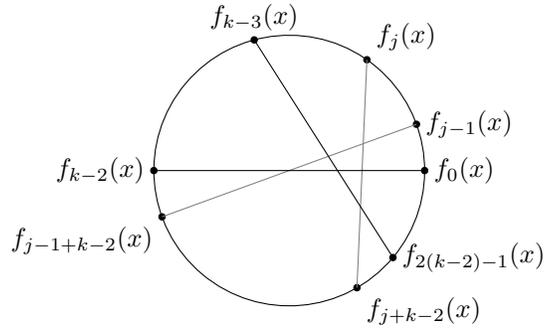
\begin{figure}
    \centering
    \begin{tikzpicture}[scale=.6,rotate=-40]
    \draw (0,0) circle (3cm);
\filldraw [black] (3,0) circle (2pt) node[right] {$f_{2(k-2)-1}(x)$};
\filldraw [black] (145:3cm) circle (2pt) node[above] {$f_{k-3}(x)$};
\filldraw [black] (40:3cm) circle (2pt) node[right] {$f_0(x)$};
\filldraw [black] (220:3cm) circle (2pt) node[left] {$f_{k-2}(x)$};

\filldraw [black] (95:3cm) circle (2pt) node[above right] {$f_j(x)$};
\filldraw [black] (60:3cm) circle (2pt) node[right] {$f_{j-1}(x)$};

\filldraw [black] (340:3cm) circle (2pt) node[below right] {$f_{j+k-2}(x)$};
\filldraw [black] (240:3cm) circle (2pt) node[below left] {$f_{j-1+k-2}(x)$};

\draw[gray] (95:3cm)--(340:3cm);
\draw[gray] (60:3cm)--(240:3cm);

\draw (3,0) -- (145:3cm);
\draw (40:3cm) -- (220:3cm);

\end{tikzpicture}
    \caption{If $\ell_j(x)$ intersects $Q_4$ and $\ell_{j-1}(x)$ intersects $Q_2$ (or the intersection of $\ell_0(x)$ and $\ell_{k-3}(x)$), then the intersection of $\ell_j(x)$ and $\ell_{j-1}(x)$ is in $Q_1$.}
    \label{fig:intersectQ1}
\end{figure}

\end{document}